\newcommand{\mumu}{{\boldsymbol{\mu}}}
\newcommand{\ka}{\ensuremath{\Bbbk}}
\newcommand{\kka}{\ensuremath{\overline{\Bbbk}}}
\newcommand{\Pro}{\ensuremath{\mathbb{P}}}
\newcommand{\Aut}{\ensuremath{\operatorname{Aut}}}
\newcommand{\Gal}{\ensuremath{\operatorname{Gal}}}
\newtheorem{theorem}[equation]{Theorem}
\newtheorem{proposition}[equation]{Proposition}
\newtheorem{lemma}[equation]{Lemma}
\theoremstyle{definition}
\theoremstyle{remark}
\newtheorem{remark}[equation]{Remark}
\title{Quotients of Severi--Brauer surfaces}
\address{Steklov Mathematical Institute of Russian Academy of Sciences, 8 Gubkina st., Moscow, 119991, Russia}
\address{Laboratory of Algebraic Geometry, National Research University Higher School of Economics, 6 Usacheva str., Moscow 119048, Russia}
\email{trepalin@mccme.ru}
\author{Andrey Trepalin}
\begin{document}

\begin{abstract}
We show that a quotient of a non-trivial Severi--Brauer surface $S$ over arbitrary field $\ka$ of characteristic $0$ by a finite group $G \subset \Aut(S)$ is $\ka$-rational, if and only if $|G|$ is divisible by $3$. Otherwise, the quotient is birationally equivalent to $S$.

\end{abstract}

%\keywords{Rationality problems, del Pezzo surfaces, Minimal model program, Cremona group}
%\subjclass[2010]
%{14E07, 14E08, 14M20.}

% 14E07, 14E08, 14M20

\maketitle

%\section{Introduction}

Let $\ka$ be an arbitrary field of characteristic zero, and $\kka$ be its algebraic closure. \mbox{A~$d$-dimensional} variety~$X$ is called a \textit{Severi--Brauer variety} if $\overline{X} = X \otimes \kka$ is isomorphic to $\Pro^d_{\kka}$. If $d = 1$ then $X$ is a conic, and if $d = 2$ then $X$ is called a \textit{Severi--Brauer surface}.

A Severi--Brauer variety is called \textit{non-trivial} if it is not isomorphic to $\Pro^d_{\ka}$. It is well known that a Severi--Brauer variety is trivial if and only if it has a $\ka$-point. Moreover, if~$X$ is a non-trivial $d$-dimensional Severi--Brauer variety and $d + 1$ is a prime number, then degree of any point on $X$ is divisible by $d + 1$ (see \cite[Theorem 53]{Kol16}). 

Complete classification of finite subgroups of automorphism groups of non-trivial Severi--Brauer surfaces is obtained in works of C.\,Shramov and V.\,Vologodsky \cite{ShV20}, \cite{Sh20a}, \cite{Sh20b}, \cite{Sh21}. Let $\mumu_n$ be a cyclic group of order $n$. Then we have the following.

\begin{theorem}[cf. {\cite[Theorem 1.3(ii)]{Sh21}}]
\label{class2}
Let $S$ be a non-trivial Severi--Brauer surface over a field $\ka$ of characteristic zero. Then any finite subgroup of $\Aut(S)$ is isomorphic to~$\mumu_n$, $\mumu_{3n}$, $\mumu_n \rtimes \mumu_3$ or $\mumu_3 \times \left( \mumu_n \rtimes \mumu_3 \right)$, where $n$ is a positive integer divisible only by~primes congruent to $1$ modulo $3$ (including the case $n=1$), and $\mumu_n \rtimes \mumu_3$ is a semidirect product corresponding to an outer automorphism of $\mumu_n$ of order $3$ acting non-trivially on each non-trivial element of $\mumu_n$.

Moreover, for any group $G$ mentioned above there exists a field $\ka$ of charectiristic zero and a non-trivial Severi--Brauer surface $S$ over $\ka$, such that $G \subset \Aut(S)$.

\end{theorem}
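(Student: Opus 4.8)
The strategy is to restate the theorem as a classification of finite subgroups of $A^{*}/\ka^{*}$, and then to combine the classification of finite subgroups of $\mathrm{PGL}_{3}(\kka)$ with Galois descent. A non-trivial Severi--Brauer surface is $S=\mathrm{SB}(A)$ for a central simple $\ka$-algebra $A$ of degree $3$; since $3$ is prime, $A$ is a division algebra (and, by a theorem of Wedderburn, a cyclic algebra). The input is the canonical isomorphism $\Aut(S)\cong\mathrm{PGL}_{1}(A)(\ka)=A^{*}/\ka^{*}$, so the first part of the theorem amounts to showing that every finite subgroup of $A^{*}/\ka^{*}$ has one of the four listed shapes.

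I would first treat cyclic subgroups. If $\bar g\in A^{*}/\ka^{*}$ has order $>1$ and $\tilde g\in A^{*}$ is a lift, then $\ka[\tilde g]\subset A$ is a commutative subalgebra of a division ring, hence a field, of degree $1$ or $3$ over $\ka$; as $\bar g\ne 1$ it is a cubic subfield $L_{\bar g}\subset A$, independent of the lift. Thus every finite cyclic subgroup of $A^{*}/\ka^{*}$ lies in $\overline{L^{*}}:=L^{*}/\ka^{*}$ for some cubic subfield $L$, and it is enough to compute the torsion subgroup $T$ of $L^{*}/\ka^{*}$ for an arbitrary cubic extension $L/\ka$ of a field of characteristic $0$. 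Here I would argue one prime at a time. If $\bar x\in T$ has order a power of a prime $q\ne 3$, then the minimal polynomial of a lift $x$ is a cubic factor of $X^{q^{k}}-c$ for some $c\in\ka^{*}$; inspecting its coefficients shows $x\in\ka(\zeta_{q})$, whence $L\subseteq\ka(\zeta_{q})$ and $3\mid[\ka(\zeta_{q}):\ka]\mid(q-1)$, i.e. $q\equiv 1\pmod 3$, and, since $\ka(\zeta_{q})/\ka$ is cyclic, a brief cohomology computation for its cyclotomic character shows the $q$-primary part of $T$ is cyclic. For $q=3$, using that $X^{9}-c$ is irreducible whenever $c\notin(\ka^{*})^{3}$ one checks that $T$ has no element of order $9$, so its $3$-part is at most $\mumu_{3}$. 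Hence $T\cong\mumu_{3^{\varepsilon}n}$ with $\varepsilon\in\{0,1\}$ and $n$ divisible only by primes $\equiv 1\pmod 3$; in particular $A^{*}/\ka^{*}$ contains no copy of $\mumu_{9}$, and its cyclic subgroups are exactly the $\mumu_{n}$ and $\mumu_{3n}$ in the list.

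Next let $G\subset A^{*}/\ka^{*}$ be an arbitrary finite subgroup, and let $B=\ka[G]\subset A$ be the generated subalgebra. Being a finite-dimensional domain, $B$ is a division ring, hence simple, so the double centralizer theorem gives $[B:\ka]\in\{1,3,9\}$: if $[B:\ka]=1$ then $G=1$; if $[B:\ka]=3$ then $B$ is a cubic field and $G\subseteq B^{*}/\ka^{*}$ is cyclic; and if $[B:\ka]=9$ then $B=A$, so $G$ acts irreducibly on $\bar S=\Pro^{2}_{\kka}$. In the last case I would invoke the classification of finite subgroups of $\mathrm{PGL}_{3}(\kka)\cong\mathrm{PGL}_{3}(\CC)$: the primitive ones ($A_{5}$, $A_{6}$, $\mathrm{PSL}_{2}(7)$, and the Hessian groups of orders $216$, $72$ and $36$) each contain a copy of $\mumu_{4}$ or $\mumu_{5}$, which is impossible by the previous paragraph; hence $G$ is imprimitive, so it permutes (transitively, by irreducibility) a decomposition $\kka^{3}=V_{1}\oplus V_{2}\oplus V_{3}$ into lines, and one checks that such a decomposition is unique for $G$, hence $\Gal$-invariant, so the three points $[V_{i}]\in\bar S$ form a closed subscheme of $S$ of degree $3$. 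By non-triviality (no $\ka$-point) and the cited theorem of Koll\'ar (every closed point has degree divisible by $3$) this subscheme is a single closed point $P$, whose residue field $L$ is a cubic splitting field of $A$ and thus embeds as a maximal subfield $L\subset A$. Then $G$ fixes $P$, so $G\subseteq N_{A^{*}}(L)/\ka^{*}$, an extension of $\mathrm{Aut}_{\ka}(L)$ (of order $1$ or $3$) by $C_{A^{*}}(L)/\ka^{*}=\overline{L^{*}}$; hence $G_{0}:=G\cap\overline{L^{*}}$ is a normal cyclic subgroup of $G$, and $G/G_{0}$ embeds into $\mathrm{Aut}_{\ka}(L)$. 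As $G$ is non-cyclic, $G/G_{0}\cong\mumu_{3}$, which forces $L/\ka$ to be cyclic, with generator $\sigma$, and $G=\langle G_{0},\bar e\rangle$ with $\bar e$ acting on $G_{0}\subset L$ through $\sigma$. To pin down the extension $1\to G_{0}\to G\to\mumu_{3}\to 1$, note that $\bar e^{3}\in G_{0}$ is $\sigma$-fixed, hence lies in $(\overline{L^{*}})^{\sigma}$, which by Hilbert's Theorem 90 is trivial when $\zeta_{3}\notin\ka$ and equals $\langle\,\overline{\sqrt[3]{d}}\,\rangle\cong\mumu_{3}$ (where $L=\ka(\sqrt[3]{d})$) when $\zeta_{3}\in\ka$; were $\bar e^{3}\ne 1$ the element $\bar e$ would have order $9$, which is excluded, so $\bar e^{3}=1$, while $\sigma$ acts fixed-point-freely on the prime-to-$3$ part of $G_{0}$ and trivially on its $3$-part. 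Distinguishing the two possibilities for the $3$-part of $G_{0}$ then yields exactly $G\cong\mumu_{n}\rtimes\mumu_{3}$ or $G\cong\mumu_{3}\times(\mumu_{n}\rtimes\mumu_{3})$ (the latter giving $\mumu_{3}\times\mumu_{3}$ when $n=1$), completing the classification.

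For the ``Moreover'' part I would realize each group on the list by an explicit cyclic algebra over a number field. Take $L=\Q(\zeta_{m})$ with $m=3^{\varepsilon}n$, pick $a\in(\Z/m\Z)^{*}$ of order $3$ with $a-1$ invertible modulo the prime-to-$3$ part of $m$ (which is possible precisely because every prime factor of $n$ is $\equiv 1\pmod 3$), taking moreover $a\equiv 1\pmod 3$ when $\varepsilon=1$ so that $\zeta_{3}\in\ka$; set $\ka=L^{\langle a\rangle}$ and $A=(L/\ka,a,b)$ with $b\in\ka^{*}$ of order $3$ in $\ka^{*}/N_{L/\ka}L^{*}$ (such $b$ exists since $\ka$ is a number field, e.g. a uniformizer at a prime inert in $L/\ka$), so that $A$ is a division algebra. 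Then in $A^{*}/\ka^{*}$ the element $\overline{\zeta_{n}}$ has order $n$, the standard generator $\bar e$ of the degree-$3$ factor acts on $\overline{L^{*}}$ through $a$, and when $\zeta_{3}\in\ka$ the image $\overline{\sqrt[3]{d}}$ of a radical generator of $L=\ka(\sqrt[3]{d})$ has order $3$ and commutes with $\bar e$; combining these elements produces $\mumu_{n}$, $\mumu_{3n}$, $\mumu_{n}\rtimes\mumu_{3}$ and $\mumu_{3}\times(\mumu_{n}\rtimes\mumu_{3})$ inside $\Aut(S)$. The step I expect to be the main obstacle is the imprimitive case of the third paragraph: showing that the system of imprimitivity preserved by $G$ is unique and therefore $\Gal$-invariant, that it descends to a closed point of degree exactly $3$ rather than acquiring a rational point, and that the associated normal cyclic subgroup really is $G\cap\overline{L^{*}}$ for an honest maximal subfield of $A$; by comparison the torsion analysis and the final extension computation are routine once the congruence $q\equiv 1\pmod 3$ and the absence of $\mumu_{9}$ have been established.
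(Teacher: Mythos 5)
First, note that the paper itself does not prove Theorem~\ref{class2}: it is imported wholesale from Shramov's classification (cf.\ \cite[Theorem 1.3(ii)]{Sh21}), so your proposal is an independent reconstruction rather than a variant of an argument appearing in this text. Its skeleton --- identifying $\Aut(S)$ with $A^*/\ka^*$ for a degree-$3$ division algebra $A$, the torsion analysis of $L^*/\ka^*$ for cubic subfields $L$ (giving the forbidden orders $2$, $4$, $5$, $9$, and the congruence $q\equiv 1 \pmod 3$), the trichotomy via $\ka[G]$ and the double centralizer theorem, the exclusion of the primitive subgroups of $\mathrm{PGL}_3(\kka)$ through their elements of order $4$ or $5$, the normalizer-of-a-maximal-subfield extension together with Hilbert 90, and the realization of all listed groups by cyclic algebras over number fields --- is sound and close in spirit to how the classification is actually proved in the literature.

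There is, however, a genuine gap exactly at the step you flag: the claim that an irreducible imprimitive $G$ preserves a \emph{unique} decomposition $\kka^3=V_1\oplus V_2\oplus V_3$ is false, and the failure occurs in cases that really arise inside a division algebra. If $x,y\in A^*$ have images of order $3$ generating $\mumu_3\times\mumu_3$, the lifts cannot commute (otherwise $\ka[x,y]$ would be a cubic subfield whose $3$-torsion modulo $\ka^*$ contains $\mumu_3\times\mumu_3$, contrary to your own Kummer analysis), so $xy=\omega yx$ and $x,y$ present $A$ as a symbol algebra; the image of this Heisenberg-type group in $\mathrm{PGL}_3(\kka)$ preserves \emph{four} decompositions into lines (the four triangles of the Hesse configuration), permuted by its normalizer, and no single one of them need be Galois-invariant. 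Hence for such $G$ the passage from ``imprimitive'' to ``a Galois-invariant triple, i.e.\ a degree-$3$ point with residue field a maximal subfield $L\subset A$'' does not follow as written. The case $G\cong\mumu_3\times\mumu_3$ itself is harmless for the statement (it is on the list), but a complete proof must still dispose of every group with a non-unique system of imprimitivity; this can be repaired, e.g.\ by observing that a subgroup of $\mathrm{PGL}_3(\kka)$ preserving two distinct triangles lies in the kernel (of order $18$, isomorphic to $\mumu_3^2\rtimes\mumu_2$) of the Hessian group's action on the four triangles, and then using the absence of $2$-torsion in $A^*/\ka^*$ to conclude $G\subseteq\mumu_3^2$ --- but some such supplementary argument is needed, and without it groups like the non-abelian $\mumu_3^2\rtimes\mumu_3$ of order $27$ are only excluded once uniqueness (which does hold for that particular group) or an equivalent descent statement is actually established. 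Two smaller points: ``no element of order $9$'' does not by itself bound the $3$-part of the torsion of $L^*/\ka^*$ by $\mumu_3$ --- you must also rule out two independent order-$3$ classes (a short Kummer argument over $\ka(\zeta_3)$ and the coprimality $[L:\ka]=3$, $[\ka(\zeta_3):\ka]\le 2$ does it); and ``$x\in\ka(\zeta_q)$'' should read $x\in\ka(\zeta_{q^k})$, which still yields $q\equiv 1\pmod 3$. The ``Moreover'' construction via cyclic algebras over number fields is essentially correct.
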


The aim of this paper is to obtain a birational classification of quotients of non-trivial Severi--Brauer surfaces by finite subgroups of automorphism groups. In particular, we want to~answer a question, for which Severi--Brauer surfaces $S$ and finite subgroups \mbox{$G \subset \Aut(S)$} the quotient $S / G$ is $\ka$-rational (i.e. birationally equivalent to $\Pro^2_{\ka}$). Note that for an~algebraically closed field $\kka$ of characteristic zero quotients of $\kka$-rational surfaces by finite automorphism groups are always $\kka$-rational by Castelnuovo's rationality criterion (see~\cite{Cast94}). Moreover, for arbitrary field $\ka$ of characteristic zero quotients of $\Pro^2_{\ka}$ by~finite automorphism groups are always $\ka$-rational (see \cite[Theorem 1.3]{Tr14}). One can find more results about rationality of quotients of $\ka$-rational surfaces by finite automorphism groups in \cite{Tr19}. The main result of this paper is the following theorem.

\begin{theorem}
\label{quot3}
Let $S$ be a non-trivial Severi--Brauer surface over a field $\ka$ of characteristic zero, and $G$ be a finite subgroup of $\Aut(S)$. Then the quotient $S / G$ is $\ka$-rational, if and only if $|G|$ is divisible by $3$. Otherwise, the quotient $S / G$ is birationally equivalent to~$S$.
\end{theorem}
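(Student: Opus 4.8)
The plan is to split into two cases according to whether $3 \mid |G|$, and in each case reduce the birational question on $S/G$ to a combination of (i) the explicit structure of finite subgroups of $\Aut(S)$ given by Theorem~\ref{class2}, (ii) a descent/Galois-cohomology analysis of the quotient, and (iii) the Minimal Model Program for surfaces over a non-closed field. Throughout, pass to $\kka$ where convenient: $\overline{S} \cong \Pro^2_{\kka}$, and $\overline{S}/\overline{G}$ is a rational surface by Castelnuovo, so $S/G$ is a geometrically rational surface over $\ka$; the point is to decide its $\ka$-birational type. I would run a $G_{\ka} = \Gal(\kka/\ka)$-equivariant MMP on a smooth projective model of $S/G$ and obtain either a del Pezzo surface or a conic bundle, then compute the relevant invariants ($K^2$, $\Pic$ as a Galois module, the Brauer class) to pin down rationality.

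For the case $3 \nmid |G|$: here $G$ is cyclic of order $n$ with $n$ divisible only by primes $\equiv 1 \pmod 3$ (the first family in Theorem~\ref{class2}, with the $\mumu_3$-free part). The key structural input is that $\Aut(\overline{S}) = \mathrm{PGL}_3(\kka)$ and a finite cyclic subgroup of order prime to $3$ lifting to $\Aut(S)$ can be diagonalized, fixing three non-collinear points of $\Pro^2_{\kka}$ which are permuted by $G_{\ka}$; equivalently $G$ acts on $S$ with a zero-dimensional fixed locus that is a single closed point of degree $3$ (this is where the "prime number $d+1=3$" divisibility of point degrees on a non-trivial Severi--Brauer surface from \cite[Theorem 53]{Kol16} enters). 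I would blow up $\overline{S}$ at these three points to resolve the quotient, descend to $\ka$, and show the minimal model of $S/G$ is again a non-trivial Severi--Brauer surface — concretely, by exhibiting a $G_{\ka}$-equivariant birational map between a model of $S/G$ and $S$ itself (for instance, the linear system of $G$-invariant cubics through the fixed points, which geometrically realizes the $d$-uple-type Veronese-and-project construction identifying $\Pro^2/\mumu_n$ with $\Pro^2$), and checking the Brauer class is preserved, so $S/G$ is not $\ka$-rational. That $S/G \sim S$ birationally and $S$ is not rational (being non-trivial) gives the "only if" direction.

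For the case $3 \mid |G|$: now $G$ contains an element $g$ of order $3$. The crucial observation is that an order-$3$ element of $\mathrm{PGL}_3(\kka)$ lifting into $\Aut(S)$ is conjugate either to a diagonal element (the "$\mumu_3$" factors) or to the image of a Heisenberg-type element, and in all the groups listed in Theorem~\ref{class2} one can find an order-$3$ subgroup whose geometric fixed locus on $\overline{S} = \Pro^2_{\kka}$ is nonempty and, more importantly, defined over $\ka$ in a way that produces a $\ka$-point on (a model of) the quotient. The cleanest route: the quotient $S/\langle g \rangle$ acquires a $\ka$-rational point — e.g. the image of the $g$-fixed line, or a $G_{\ka}$-fixed singular point of the quotient coming from a fixed point of $g$ — which forces $S/\langle g\rangle$ to be a geometrically rational surface with a $\ka$-point; then by the aforementioned \cite[Theorem 1.3]{Tr14}-type results together with an MMP analysis showing the minimal model is a del Pezzo surface of large degree or $\Pro^2_\ka$, one concludes $S/\langle g\rangle$, hence $S/G$ (a further quotient by the prime-to-$3$ part, handled as in the first case but now starting from something rational), is $\ka$-rational. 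Equivalently and perhaps more robustly: show $3$ divides the index $[\Aut(S):\text{something}]$ forces the Brauer obstruction of $\overline{S}$ (a $3$-torsion Brauer class) to die in the quotient because passing to a degree-$3$ cover/quotient kills $3$-torsion in $\mathrm{Br}$.

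The main obstacle I expect is the case $3 \mid |G|$ with $G$ of the mixed type $\mumu_3 \times (\mumu_n \rtimes \mumu_3)$ or $\mumu_n \rtimes \mumu_3$: one must verify that the order-$3$ subgroup can be chosen so that the quotient genuinely has a $\ka$-point (not merely that the Brauer class dies, which only gives geometric rationality plus triviality of one obstruction — over a general field one still needs to defeat possible conic-bundle obstructions or higher del Pezzo obstructions to $\ka$-rationality). Carrying out the $G_{\ka}$-equivariant MMP carefully — tracking the Galois action on the Picard lattice of the resolved quotient, identifying the extremal contractions, and ruling out minimal conic bundles with no section and minimal del Pezzo surfaces of degree $\le 4$ — is the technical heart, and is presumably where the hypothesis that the primes dividing $n$ are $\equiv 1 \pmod 3$ gets used to control which Galois modules can occur.
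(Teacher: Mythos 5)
Your proposal is a plan rather than a proof, and several of its load-bearing steps are either unjustified or incorrect. In the case $3 \nmid |G|$, the suggested direct identification of $S/G$ with $S$ via ``$G$-invariant cubics through the fixed points'' does not work in general: for $\operatorname{diag}(\xi_n;\xi_n^a;1)$ with $n$ large the invariant cubics through the three fixed points can reduce essentially to $xyz$, so no birational map is produced, and the claim that ``the Brauer class is preserved'' is exactly what needs proof. The paper instead observes that $S/G$ is a $\ka$-form of a toric surface, runs a Galois-equivariant MMP to land on a degree $9$ del Pezzo surface without $\ka$-points, and then uses two genuinely nontrivial inputs you omit: the existence of a rational map $S \dashrightarrow S'$ forces the class of $S'$ to lie in the cyclic subgroup of $\mathrm{Br}(\ka)$ generated by the class of $S$ (\cite[Exercise 3.3.8(iii)]{GS18}), and $S^{op}$ is birationally equivalent to $S$ by Amitsur \cite{Ami55} --- without the latter, the quotient could a priori be $S^{op}$ and your ``class is preserved'' assertion fails, while non-rationality itself follows more simply from the absence of $\ka$-points (degrees of points on $S$ are divisible by $3$, orbit sizes are not).

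In the case $3 \mid |G|$ the proposed sources of a $\ka$-point on $S/\langle g\rangle$ are wrong: by the fixed-point analysis (Lemma \ref{action}), an order-$3$ automorphism of a non-trivial Severi--Brauer surface has no fixed line (otherwise the unique isolated fixed point would be a $\ka$-point on $S$), and its three isolated fixed points are transitively permuted by $\Gal\left(\kka/\ka\right)$, so their images in the quotient form a point of degree $3$, not a $\ka$-point; likewise ``a degree-$3$ quotient kills $3$-torsion in $\mathrm{Br}$'' is a heuristic, not an argument. Establishing rationality of $S/\mumu_3$ is the technical heart of the paper and requires the explicit constructions you do not supply: in the cyclic-cubic case, blowing up the fixed triple to a degree $6$ del Pezzo surface, then a $\ka$-rational triple of $G$-fixed points, mapping anticanonically to a singular cubic surface on which $G$ acts linearly fixing a plane, so the quotient is a linear projection onto $\Pro^2_{\ka}$ (Lemma \ref{rat3}); in the $\mathfrak{S}_3$ case, producing a point of degree $2$ (via $L$-rationality over the quadratic subfield) and a point of degree $3$, plus a rationality criterion for degree $6$ del Pezzo surfaces (Lemmas \ref{dP6rat} and \ref{rat6}). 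Finally, your reduction order is group-theoretically flawed: for $G \cong \mumu_n \rtimes \mumu_3$ the order-$3$ subgroups are not normal, so one cannot form $S/G$ as a further quotient of $S/\langle g\rangle$; the paper quotients first by the normal subgroup $\mumu_n$ (Lemma \ref{nonrat} and Remark \ref{eqnonrat}), obtaining a Severi--Brauer surface with a residual $\mumu_3$ or $\mumu_3^2$ action, and the $\mumu_3^2$ case, which you never address, is settled by \cite[Proposition 4.5]{Tr14}.
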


This theorem is an analogue of the following proposition.

\begin{proposition}
\label{quot2}
Let $C$ be a conic over a field $\ka$ of characteristic zero without $\ka$-points, and $G$ be a finite subgroup of $\Aut(C)$. Then the quotient $C / G$ is isomorphic to $\Pro^1_{\ka}$, if and only if $G$ has even order.
\end{proposition}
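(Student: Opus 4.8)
The plan is to note that $C/G$ is a conic, reduce the ``if'' direction to the case $G\cong\mumu_2$ by Lüroth's theorem, and dispose of that case by an explicit Galois-descent computation with norms; the ``only if'' direction is a short degree count. To begin, $C/G$ is a smooth projective curve over $\ka$, being a quotient of a smooth projective curve by a finite group in characteristic zero, and $(C/G)\otimes_\ka\kka\cong C_{\kka}/G\cong\Pro^1_{\kka}/G\cong\Pro^1_{\kka}$; hence $C/G$ is a geometrically rational smooth projective curve over $\ka$, i.e.\ a conic, and therefore $C/G\cong\Pro^1_\ka$ if and only if $C/G$ has a $\ka$-point. Write $\pi\colon C\to C/G$ for the quotient morphism, which is finite of degree $|G|$.

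For the ``only if'' direction, suppose $p\in(C/G)(\ka)$. The reduced fibre $Z:=(\pi^{-1}(p))_{\mathrm{red}}$ is a reduced $0$-dimensional closed subscheme of $C$ defined over $\ka$, hence a disjoint union of closed points; since $C$ is a conic without a $\ka$-point, every closed point of $C$ has even degree, so $\deg_\ka Z$ is even. On the other hand $Z\otimes_\ka\kka$ is precisely the $G$-orbit $O\subset C(\kka)$ lying over $p$, so $\deg_\ka Z=|O|$. Thus $|O|$ is even, and since $|O|$ divides $|G|$, the order $|G|$ is even.

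For the ``if'' direction, assume $|G|$ is even and fix $\sigma\in G$ of order $2$. If $C/\langle\sigma\rangle\cong\Pro^1_\ka$, then $\ka(C)^G\subseteq\ka(C)^{\langle\sigma\rangle}=\ka(C/\langle\sigma\rangle)\cong\ka(t)$, and Lüroth's theorem forces the intermediate field $\ka(C)^G$ to be rational, i.e.\ $C/G\cong\Pro^1_\ka$; so it suffices to treat $G=\langle\sigma\rangle$. Over $\kka$ the involution $\sigma$ has two distinct fixed points on $C_{\kka}\cong\Pro^1_{\kka}$, and since $\sigma$ is defined over $\ka$ their union is $\Gal(\kka/\ka)$-stable; as $C(\ka)=\varnothing$ it is a closed point $q$ of $C$ of degree $2$, with residue field a quadratic extension $L/\ka$ whose nontrivial automorphism I call $\gamma_0$. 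Since $q$ splits over $L$ we have $C_L\cong\Pro^1_L$; choosing such an isomorphism carrying the two geometric points of $q$ to $0$ and $\infty$ turns $\sigma$ into $z\mapsto-z$, while the $\gamma_0$-semilinear automorphism $\theta$ of $\Pro^1_L$ whose fixed locus is $C$ commutes with $\sigma$ and interchanges $0,\infty$, hence has the form $\theta\colon z\mapsto\mu/\gamma_0(z)$ with $\mu\in L^*$; the relation $\theta^2=\mathrm{id}$ forces $\mu\in\ka^*$. Then $C(\ka)=\{z\in L^*:N_{L/\ka}(z)=\mu\}=\varnothing$, so $\mu\notin N_{L/\ka}(L^*)$, whereas passing to the quotient by $z\mapsto-z$ with coordinate $w=z^2$ the induced semilinear automorphism is $w\mapsto\mu^2/\gamma_0(w)$, so $(C/\langle\sigma\rangle)(\ka)=\{w\in L^*:N_{L/\ka}(w)=\mu^2\}$ contains $w=\mu$ because $N_{L/\ka}(\mu)=\mu^2$. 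Hence $C/\langle\sigma\rangle$ has a $\ka$-point and is $\Pro^1_\ka$.

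The main obstacle is exactly this last claim — that the quotient of a non-split conic by an involution is split; the degree count and the Lüroth reduction are formal once it is available. If one prefers to avoid the explicit descent computation, one can instead argue that $C\setminus q$ is a torsor under the norm-one torus $T^{(1)}_{L/\ka}$ whose class in $H^1(\ka,T^{(1)}_{L/\ka})\cong\ka^*/N_{L/\ka}(L^*)$ is the class of $C$, and that dividing by $\langle\sigma\rangle=T^{(1)}_{L/\ka}[2]$ doubles this class and hence kills it; equivalently, the quaternion symbol attached to $C/\langle\sigma\rangle$ is twice that of $C$, and therefore trivial. A point worth double-checking is that the argument uses only one chosen involution of $G$, so no compatibility among the several involutions of $G$ enters; the interaction between $\Gal(\kka/\ka)$ and $G$ is used only through the facts that elements of $G$ are defined over $\ka$ and that every closed point of $C$ has even degree.
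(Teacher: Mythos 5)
Your proof is correct, and it takes a genuinely different route from the paper for both halves. For the ``only if'' part you and the paper use the same two facts (closed points of a pointless conic have even degree, orbit cardinality divides $|G|$), just phrased contrapositively, so that part is essentially identical. The real divergence is in the ``if'' direction. The paper also isolates an involution $g\in G$, but it handles it by embedding $C$ anticanonically in $\Pro^2_{\ka}$, diagonalizing $g$ over $\ka$ as $(x:y:z)\mapsto(x:y:-z)$, and observing that the quotient map $C\to C/\langle g\rangle$ is the restriction of the linear projection $(x:y:z)\mapsto(x:y)$; this exhibits $\langle g\rangle$-orbits defined over $\ka$, whose $G$-orbits are then Galois-stable, so $C/G$ acquires a $\ka$-point directly --- no reduction to the cyclic case via function fields is needed. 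You instead reduce $G$ to $\langle\sigma\rangle$ by L\"uroth (a clean and legitimate step, valid over any field) and then settle the involution case by an explicit Galois-descent computation: coordinates with $\sigma\colon z\mapsto -z$, descent datum $\theta\colon z\mapsto \mu/\gamma_0(z)$ with $\mu\in\ka^*$, and the observation that $w=\mu$ satisfies $N_{L/\ka}(w)=\mu^2$, which is exactly the statement that passing to the quotient squares (hence kills) the relevant class in $\ka^*/N_{L/\ka}(L^*)$. Your cocycle/norm computation checks out (the constraint $\theta^2=\mathrm{id}$ does force $\mu\in\ka^*$, and $\theta$ descends to the quotient because $\sigma$ is defined over $\ka$), and your torsor/quaternion-symbol reformulation is a nice conceptual gloss that makes transparent the Brauer-group mechanism which the paper's Theorem \ref{quot3} exhibits for Severi--Brauer surfaces with $3$ in place of $2$. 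What the paper's argument buys is brevity and no cohomological bookkeeping; what yours buys is an explicit explanation of \emph{why} the quotient splits (the class of the conic is doubled), at the cost of the descent computation and the appeal to L\"uroth.
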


\begin{proof}
The degree of any point on $C$ is even. Assume that the order of $G$ is odd. Then any $G$-orbit on the set of geometric points on $C$ has odd cardinality. Thus the image of~any such orbit on $C / G$ is not defined over $\ka$, and the quotient $C / G$ does not have $\ka$-points and is not isomorphic to $\Pro^1_{\ka}$.

Now assume that the order of $G$ is even. Therefore there is an element $g \in G$ of order~$2$. The anticanonical map given by the linear system $|-K_C|$ gives an embedding $C \hookrightarrow \Pro^2_{\ka}$, such that the action of $g$ on $C$ induces an action of $g$ on $\Pro^2_{\ka}$. One can choose coordinates $(x : y : z)$ on $\Pro^2_{\ka}$ such that the action of $g$ is given by $(x:y:z) \mapsto (x: y : -z)$. Then the quotient map $C \rightarrow C / \langle g \rangle$ is just a restriction of the projection $\Pro^2_{\ka} \dashrightarrow \Pro^1_{\ka}$ given by~$(x:y:z) \mapsto (x:y)$.

Therefore some $\langle g \rangle$-orbits are defined over $\ka$, and thus some $G$-orbits are defined over~$\ka$ too. It means that there are $\ka$-points on the quotient $C / G$, and therefore this quotient is~isomorphic to $\Pro^1_{\ka}$, since one has $\Pro^1_{\kka} / G \cong \Pro^1_{\kka}$ for any finite subgroup $G \subset \Aut\left(\Pro^1_{\kka}\right)$.

\end{proof}

The proof of Theorem \ref{quot3} is more complicated, and we decompose it into several lemmas.

\begin{lemma}
\label{action}
Let $S$ be a non-trivial Severi--Brauer surface over a field $\ka$ of characteristic zero and $G$ be a finite subgroup of~$\Aut(S)$ isomorphic to $\mumu_n$. Then the set of $G$-fixed points on $S$ consists of three isolated geometric points. If $n > 3$ then these points are defined over extension $K / \ka$ of degree $3$. 
\end{lemma}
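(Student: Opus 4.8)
The plan is to analyze the action of $G \cong \mumu_n$ on $\XX \cong \Pro^2_{\kka}$ after base change to the algebraic closure, and then descend the conclusions to $\ka$ using the Galois action. First I would observe that since $\XX \cong \Pro^2_{\kka}$ and $G$ is cyclic, a generator $g$ of $G$ acts on $\Pro^2_{\kka}$ via an element of $\mathrm{PGL}_3(\kka)$; lifting to $\mathrm{GL}_3(\kka)$ and diagonalizing, the element $g$ is conjugate to a diagonal matrix with eigenvalues that are (up to scalar) roots of unity. The fixed locus of a diagonalizable element of $\mathrm{PGL}_3$ on $\Pro^2$ is either three isolated points (when the three eigenvalues are pairwise distinct), or a point together with a line (when exactly two eigenvalues coincide), or all of $\Pro^2$ (when $g$ is trivial). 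So the key point is to rule out the middle case, i.e. to show that $g$ cannot have a repeated eigenvalue — equivalently that $g$ acts on $\Pro^2_{\kka}$ with exactly three isolated fixed points.

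The main obstacle, and the crux of the argument, is exactly this: ruling out an invariant line. Here is where non-triviality of $S$ enters. Suppose $g$ fixes a line $\ell \subset \XX$ pointwise and an isolated point $p$ off $\ell$. The point $p$ is intrinsic (it is the unique fixed point not on the one-dimensional component), hence the $\Gal(\kka/\ka)$-orbit of $p$ is $G$-stable; but more importantly, $G$ being a normal (indeed central, since cyclic and equal to its own... actually one only knows $G \subset \Aut(S)$, but the fixed locus is canonically attached to $G$) — in any case the geometric fixed locus of $G$ is defined over $\ka$ as a subscheme of $S$, being $\Gal$-invariant. If this fixed locus contained a line $\ell$, then $\ell$ would be a $\Gal$-invariant divisor, giving a curve of degree $1$ on $S$ defined over $\ka$; but a conic or line on a non-trivial Severi--Brauer surface must have degree divisible by $3$ over $\ka$ (by the result of Kollár quoted in the excerpt, a point, and hence a curve, on a non-trivial Severi--Brauer surface has degree divisible by $3$; more precisely a line in $\XX$ would be a $\ka$-rational curve of anticanonical degree $1$, contradicting that $-K_S$ is $3$ times a generator of $\Pic(S)$ which is not effective of the wrong index). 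So no invariant line can be defined over $\ka$; but the whole fixed locus \emph{is} defined over $\ka$, so it cannot contain a line. Hence $g$, and therefore $G$, has exactly three isolated geometric fixed points.

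Next I would address the field of definition of these three points when $n > 3$. Let $Z = \{p_1, p_2, p_3\}$ be the $G$-fixed locus; it is a reduced zero-dimensional $\ka$-subscheme of $S$ of length $3$, so it is a disjoint union of closed points whose degrees sum to $3$: the possibilities are $3 = 1+1+1$, $3 = 1+2$, or $3 = 3$. If there were a $\ka$-point among the $p_i$, then $S$ would have a $\ka$-point and hence be trivial, contradiction; this rules out $1+1+1$ and $1+2$. Therefore $Z$ is a single closed point of degree $3$, i.e. the three geometric points $p_1, p_2, p_3$ are defined over and conjugate by a degree-$3$ extension $K/\ka$, and $\Gal(\kka/\ka)$ permutes them transitively. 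The hypothesis $n > 3$ is not actually needed to get degree exactly $3$ by this argument — any $n>1$ works — so presumably for $n = 2, 3$ one wants to be careful, but the statement only claims the degree-$3$ conclusion for $n > 3$, which is certainly covered. (One subtlety: I should make sure $G$ is nontrivial so that $g \neq \mathrm{id}$ and the fixed locus is genuinely zero-dimensional; for $n = 1$ the statement is vacuous or trivially false, so implicitly $n \geq 2$, or one reads ``three isolated geometric points'' as applying when $G$ acts nontrivially.)

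To summarize the order of steps: (1) base change to $\kka$, diagonalize a generator $g$ in $\mathrm{GL}_3$; (2) classify the fixed locus of $g$ on $\Pro^2_{\kka}$ into the three cases; (3) rule out the line-plus-point case by noting the fixed locus is $\Gal$-invariant, hence defined over $\ka$, and a line on $S$ over $\ka$ would force $S$ to be trivial (via the divisibility of degrees by $3$, or equivalently via the structure of $\Pic(S)$); (4) conclude the fixed locus is three isolated geometric points; (5) since $S$ has no $\ka$-point, the degree-$3$ reduced $\ka$-scheme $Z$ of fixed points must be irreducible, i.e. a single point of degree $3$, so the three geometric points are defined over a cubic extension $K/\ka$. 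The hard part is step (3), the exclusion of an invariant line, which is exactly the place where non-triviality of the Severi--Brauer surface is essential — over a field with a $\ka$-point, or over $\kka$, cyclic groups of order $>3$ can perfectly well fix a line pointwise.
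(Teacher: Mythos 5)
Your first half is essentially sound: you diagonalize a generator, reduce to the dichotomy ``three isolated points'' versus ``point plus line,'' and exclude the latter using non-triviality of $S$. Your route for the exclusion (the line component of the fixed locus is Galois-stable, hence a degree-one divisor defined over $\ka$, which cannot exist on a non-trivial Severi--Brauer surface) is a legitimate alternative to the paper's simpler observation that in that case the \emph{isolated} fixed point is unique, hence Galois-fixed, hence a $\ka$-point on $S$ --- a contradiction. (Your parenthetical justification is garbled, though: $\Pic(S)$ is generated by $-K_S$ itself, and the point is precisely that the line class of $\Pic(\overline{S})$ does not descend; alternatively, a $\ka$-form of a line carries a point of degree at most $2$, contradicting divisibility of degrees by $3$.)

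The genuine gap is in the $n>3$ assertion, which is the actual content of that part of the lemma. Your step (5) only shows that $\Gal\left(\kka/\ka\right)$ permutes $p_1,p_2,p_3$ transitively, i.e.\ that the fixed locus is a single closed point of degree $3$. That holds for every $n>1$ and is \emph{not} what is claimed. ``Defined over an extension $K/\ka$ of degree $3$'' means here that all three geometric points split over one cubic field, equivalently that the image of the Galois group in the symmetric group on $\{p_1,p_2,p_3\}$ is $\mumu_3$ rather than $\mathfrak{S}_3$; in the $\mathfrak{S}_3$ case each point has a cubic (non-Galois) residue field, but the triple only splits over a degree-$6$ field. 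Both configurations really occur when $n=3$ --- this is exactly why the paper must treat them separately in Lemmas \ref{rat3} and \ref{rat6} --- so your remark that ``the hypothesis $n>3$ is not actually needed'' is a symptom that you proved a strictly weaker statement than the one the paper later relies on (Lemma \ref{nonrat} needs $\Gal(K/\ka)\cong\mumu_3$ acting on the fan). The missing argument is the paper's tangent-weight computation: for $n>3$ the classification provides a subgroup $\mumu_p$ with $p\equiv 1 \pmod 3$; writing its generator as $\operatorname{diag}\left(\xi_p;\xi_p^a;1\right)$ and comparing its weights at $p_1,p_2,p_3$ with the action of a $3$-cycle $\gamma$ gives $a^2-a+1\equiv 0 \pmod p$, while an involution $\delta$ fixing $p_1$ and swapping $p_2,p_3$ would force $a^2\equiv 1\pmod p$; these are incompatible, so no such $\delta$ exists and the Galois action on the three points is cyclic of order $3$. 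Nothing in your proposal plays this role.
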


\begin{proof}
Consider the action of $G$ on $\overline{S} \cong \Pro^2_{\kka}$. This action can be diagonalized in $\mathrm{PGL}_3\left(\kka\right)$. One can see that the set of fixed points of $G$ either consists of an isolated fixed point and a~line, or consists of three isolated points $p_1$, $p_2$ and $p_3$. In the former case the isolated fixed point is unique, and therefore defined over~$\ka$, but there are no $\ka$-points on~$S$. Therefore $G$ has three isolated fixed geometric points on $S$.

Isolated fixed geometric points of $G$ are transitevely permuted by~the~Galois group~$\Gal\left(\kka / \ka\right)$. Therefore these points are defined either over extension $K / \ka$ of~degree~$3$ with~$\Gal\left(K / \ka \right) \cong \mumu_3$, or over extension $K / \ka$ of degree $6$ with $\Gal\left(K / \ka \right) \cong \mathfrak{S}_3$, where~$\mathfrak{S}_3$ is a~non-abelian group of order $6$. In particular, there is an element $\gamma \in \Gal\left(\kka / \ka\right)$ of~order~$3$, such that $\gamma p_1 = p_2$, $\gamma p_2 = p_3$, $\gamma p_3 = p_1$.

If $n > 3$ then there is a subgroup $\mumu_{p} \subset G$ for a prime number $p$ such that $p$ equals $1$ modulo~$3$. The~action of a generator of $\mumu_p$ on $\overline{S} \cong \Pro^2_{\kka}$ can be written as $\operatorname{diag}(\xi_p; \xi_p^a; 1)$, where $\xi_p$ is \mbox{a~$p$-th} root of unity. This element acts on the tangent spaces $T_{p_1} \overline{S}$, $T_{p_2} \overline{S}$ and~$T_{p_3} \overline{S}$ as $\operatorname{diag}\left(\xi_p; \xi_p^a \right)$, $\operatorname{diag}\left(\xi_p^{a-1}; \xi_p^{-1} \right)$ and $\operatorname{diag}\left(\xi_p^{-a}; \xi_p^{1-a} \right)$ respectively.

One has $\gamma p_1 = p_2$. Thus $\gamma \left( \xi_p \right) = \xi_p^{a-1}$, and
$$
\xi_p^{-1} = \gamma \left( \xi_p^a \right) = \gamma \left( \xi_p \right)^a= \xi_p^{a(a-1)}. 
$$
Therefore $a^2 - a + 1 = 0$ modulo $p$. In particular, $a \neq 1$ and $a \neq -1$ modulo $p$.

Assume that the Galois group $\Gal\left(\kka / \ka\right)$ contains an element $\delta$ such that $\delta p_1 = p_1$, $\delta p_2 = p_3$ and $\delta p_3 = p_2$. Then $\delta \left( \xi_p \right) = \xi_p^a$, and $\delta \left( \xi_p^2 \right) = \xi_p$. Therefore $a^2 = 1$ modulo $p$. But $a \neq \pm 1$ modulo $p$. We have a contradiction. Hence if $n > 3$ then the points $p_1$, $p_2$ and $p_3$ are defined over an extension $K / \ka$ of degree $3$.
\end{proof}

\begin{lemma}
\label{nonrat}
Let $S$ be a non-trivial Severi--Brauer surface over a field $\ka$ of characteristic zero and $G$ be a finite subgroup of~$\Aut(S)$ isomorphic to $\mumu_n$, where $n$ is a positive integer divisible only by primes congruent to $1$ modulo $3$. Then the quotient $S / G$ is birationally equivalent to $S$.

\end{lemma}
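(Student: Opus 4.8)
The plan is to reduce the statement to an explicit birational analysis of a quotient of a del Pezzo surface of degree $6$, and then identify the outcome via relative Brauer groups. If $n=1$ then $G$ is trivial and $S/G=S$, so assume $n>1$; since every prime divisor of $n$ is congruent to $1$ modulo $3$, this forces $n\geq 7$, in particular $n>3$, so Lemma~\ref{action} applies: $G$ has exactly three isolated geometric fixed points $p_1,p_2,p_3$ on $S$, they are defined over a cyclic cubic extension $K/\ka$, and the order-$3$ generator $\gamma$ of $\Gal(K/\ka)$ cyclically permutes them. Since $S$ acquires the $K$-points $p_i$, one has $S\otimes_\ka K\cong\Pro^2_K$, so $K$ splits the degree-$3$ division algebra $A$ for which $S=\mathrm{SB}(A)$. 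I would also record at the outset the Brauer computation needed later: as $\ka(S/G)\subseteq\ka(S)$ with $\ka(S)/\ka(S/G)$ Galois of group $G\cong\mumu_n$, the restriction of $[A]$ to $\ka(S/G)$ is killed by $3$ and, through $\operatorname{cor}\circ\operatorname{res}$, by $n=[\ka(S):\ka(S/G)]$; since $\gcd(3,n)=1$ it vanishes, so $[A]\in\operatorname{Br}(\ka(S/G)/\ka)$, while $\ka(S/G)\subseteq\ka(S)$ also gives $\operatorname{Br}(\ka(S/G)/\ka)\subseteq\operatorname{Br}(\ka(S)/\ka)=\langle[A]\rangle$ by Amitsur's theorem. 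Hence $\operatorname{Br}(\ka(S/G)/\ka)=\langle[A]\rangle\cong\Z/3$; in particular $S/G$ is not $\ka$-rational.

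Next I would blow up the Galois orbit $\{p_1,p_2,p_3\}$; as it is Galois-stable and pointwise $G$-fixed, the blow-up $\pi\colon\widetilde S\to S$ is defined over $\ka$ and $G$-equivariant, $\widetilde S$ is a del Pezzo surface of degree $6$ birational to $S$, and $\widetilde S/G$ is birational to $S/G$. On $\widetilde S$ the $\pi$-exceptional curves $E_1,E_2,E_3$ and the strict transforms $\widetilde L_{12},\widetilde L_{13},\widetilde L_{23}$ of the lines $\overline{p_ip_j}$ form a hexagon of $(-1)$-curves on which $\Gal(\kka/\ka)$ acts through the order-$3$ rotation $\gamma$, while $G$ preserves each of the six curves and has as its fixed locus exactly the six vertices of the hexagon. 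Then $\widetilde S/G$ is a normal projective surface with a cyclic quotient singularity below each of the six vertices, of type determined by the eigenvalues of $G$ on the corresponding tangent space of $\widetilde S$, which are read off from the computations in the proof of Lemma~\ref{action}. Passing to the minimal resolution $Z\to\widetilde S/G$ gives a smooth projective geometrically rational surface over $\ka$ containing a cycle of smooth rational curves — the strict transforms of the images of the $E_i$ and $\widetilde L_{ij}$ together with the Hirzebruch--Jung chains over the six singular points — with $\Gal(\kka/\ka)$ acting through an order-$3$ rotation of the cycle. Running the $\ka$-minimal model program on $Z$ (contracting Galois-stable families of pairwise disjoint $(-1)$-curves) should terminate at a del Pezzo surface of degree $9$ and Picard rank $1$ over $\ka$ — necessarily a non-trivial Severi--Brauer surface $S'$, since $S/G$ is not $\ka$-rational.

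It then remains to identify $S'$ birationally. The relative Brauer group is a birational invariant of smooth projective surfaces, so $\operatorname{Br}(\ka(S')/\ka)=\operatorname{Br}(\ka(S/G)/\ka)=\langle[A]\rangle$; by Amitsur's theorem two Severi--Brauer surfaces are birational precisely when their classes generate the same cyclic subgroup of $\operatorname{Br}(\ka)$, so $S'$ is birational to $\mathrm{SB}(A)$, and therefore $S/G$ is birational to $S$.

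The main obstacle is the explicit resolution-and-MMP step: one must compute the six Hirzebruch--Jung continued fractions, keep track of the self-intersections of all components of the resulting cycle of rational curves and of the Galois action on them, and carry out the equivariant contractions carefully enough to see that the program really terminates at a surface whose geometric model is $\Pro^2$. The relative Brauer group $\Z/3$, the non-rationality of $S/G$, and the fact that here all Galois action factors through $\Gal(K/\ka)\cong\mumu_3$ already exclude the other minimal outcomes — conic bundles, del Pezzo surfaces of degree at most $8$ other than cubic surfaces, and the trivial Severi--Brauer surface — but a minimal cubic surface with first Galois cohomology of its Picard group isomorphic to $\Z/3$ is not excluded on formal grounds, and ruling it out is exactly what the explicit geometry of the cycle of curves must accomplish.
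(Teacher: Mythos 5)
Your reduction steps are fine where they are explicit: the observation that $n>1$ forces $n\geqslant 7$ so that Lemma~\ref{action} applies, the computation that $[A]$ dies in $\mathrm{Br}\left(\ka(S/G)\right)$ (restriction--corestriction kills it by $n$, it is $3$-torsion, and $\gcd(n,3)=1$), hence $\mathrm{Br}\left(\ka(S/G)/\ka\right)=\langle [A]\rangle \cong \Z/3\Z$ by Amitsur, is a correct and even slightly stronger substitute for the paper's simpler remark that $S/G$ has no $\ka$-points because $G$-orbits have cardinality prime to $3$; and your final identification of a minimal Severi--Brauer model as $S$ or $S^{op}$, both birational to $S$, matches the paper's use of \cite[Exercise 3.3.8(iii)]{GS18} and \cite{Ami55}. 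But the central step is missing, and you say so yourself: you assert that the $\ka$-MMP applied to a resolution of $\widetilde{S}/G$ ``should terminate at a del Pezzo surface of degree $9$'', while conceding that a $\ka$-minimal cubic surface (and, with your level of detail, other low-degree outcomes) is ``not excluded on formal grounds''. The relative Brauer group cannot exclude it --- minimal cubic surfaces without points with $H^1\left(\Gal(\kka/\ka),\Pic\right)\cong\Z/3\Z$ do exist --- so as written the proof does not identify the minimal model, and the Hirzebruch--Jung/contraction computation you defer is exactly the load-bearing part.

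The paper closes precisely this gap with a structural argument you do not use: since $G\cong\mumu_n$ is cyclic and diagonalizable, it lies in a maximal torus of $\mathrm{PGL}_3(\kka)$, so $\overline{S}/G$ is a \emph{toric} surface and the descent datum makes $S/G$ a $\ka$-form on whose fan the group $\mumu_3\cong\Gal(K/\ka)$ acts. Equivariant resolution and the $\mumu_3$-equivariant MMP then never leave the class of geometrically toric surfaces, every toric del Pezzo surface or conic bundle has $K_X^2\geqslant 6$, a faithful $\mumu_3$-action on the fan occurs only in degrees $9$ and $6$, and degree $6$ is never $\mumu_3$-minimal (one contracts a $\mumu_3$-orbit of disjoint $(-1)$-curves). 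This excludes cubic surfaces and conic bundles in one stroke, which is what your proposal lacks; if you want to keep your blow-up-of-the-hexagon picture, you would either have to import this toric constraint (your quotient is the same geometrically toric surface) or actually carry out the explicit resolution and equivariant contractions you list as the ``main obstacle''.
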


\begin{proof}
Any point on a non-trivial Severi--Brauer surface has degree divisible by $3$. Therefore there are no $\ka$-points on $S / G$, since cardinalities \mbox{of~$G$-orbits} are not divisible by~$3$.  

For $n = 1$ the assertion of the lemma is trivial. Therefore assume that $n > 1$.

By Lemma \ref{action} the set of $G$-fixed points on $S$ consists of three isolated geometric points defined over an extension $K / \ka$ of degree $3$. Any cyclic group acting on $\overline{S} \cong \Pro^2_{\kka}$ is~a~subgroup of a torus acting on $\overline{S}$. Therefore the quotient $S / G$ is a $\ka$-form of a toric surface with three singular points, that are images of $G$-fixed points on $S$. The Galois group $\mumu_3 \cong \Gal\left(K/\ka\right)$ acts on the fan of the corresponding toric surface. Let us resolve the singularities of $S / G$ and run $\mumu_3$-equivariant minimal model program. As~a~result we should obtain $\mumu_3$-minimal del Pezzo surface or conic bundle $S'$, such that $\overline{S}'$ is a toric surface. Any toric del Pezzo surface or conic bundle $X$ has $K_X^2 \geqslant 6$, and one can easily check that for such surface a~group $\mumu_3$ can faithfully act only on the fan of a del Pezzo surface of~degree $9$ or $6$. Moreover, del~Pezzo surface of degree $6$ is not $\mumu_3$-minimal, since one can $\mumu_3$-equivariantly blow down a triple of disjoint $(-1)$-curves. Thus $S'$ is a del Pezzo surface of degree~$9$ without $\ka$-points, i.e. non-trivial Severi--Brauer surface.

The existence of rational map $S \dashrightarrow S'$ implies that the class of $S'$ in the Brauer group~$\mathrm{Br}\left(\ka\right)$ lies in the cyclic group generated by the class of $S$ (see \cite[Exercise 3.3.8(iii)]{GS18}). Therefore $S'$ is either isomorphic to $\Pro^2_{\ka}$, that is impossible since there are no $\ka$-points on $S'$, or~isomorphic to $S$, or~isomorphic to $S^{op}$, where $S^{op}$ is the Severi--Brauer surface corresponding to the central simple algebra opposite to the one corresponding to~$S$. The surface $S^{op}$ is birationally equivalent to $S$ (see \cite{Ami55}). Thus $S / G$ is birationally equivalent to $S$.

\end{proof}

\begin{remark}
\label{eqnonrat}
Note that the proof of Lemma \ref{nonrat} can be easily generalized on the case where $N \cong \mumu_n$ is a normal subgroup of a finite group $G$, such that $G$ is isomorphic to $\mumu_{3n}$, \mbox{$\mumu_n \rtimes \mumu_3$} or~$\mumu_3 \times \left( \mumu_n \rtimes \mumu_3 \right)$. In this case the quotient $S / N$ is $G / N$-birationally equivalent to~a~Severi--Brauer surface $S'$, since the action of $G / N$ on the fan corresponding to $S / N$ is either trivial, or coincides with the action of $\mumu_3 \cong \Gal\left(K / \ka\right)$.
\end{remark}

Now assume that a group $\mumu_3$ acts on a non-trivial Severi--Brauer surface $S$. Then by Lemma \ref{action} the group $\mumu_3$ has three isolated fixed geometric points defined either over extension $K / \ka$ of degree $3$ with $\Gal\left(K / \ka \right) \cong \mumu_3$, or over extension $K / \ka$ of degree $6$ with $\Gal\left(K / \ka \right) \cong \mathfrak{S}_3$. We consider these two cases separately, and show that in the both cases the quotient $S / \mumu_3$ is $\ka$-rational.

\begin{lemma}
\label{rat3}
Let $S$ be a non-trivial Severi--Brauer surface and $G$ be a finite subgroup of~$\Aut(S)$ isomorphic to $\mumu_3$. Assume that the isolated fixed points of $G$ are defined over extension $K / \ka$ of degree $3$ with $\Gal\left(K / \ka \right) \cong \mumu_3$. Then the quotient $S / G$ is $\ka$-rational.

\end{lemma}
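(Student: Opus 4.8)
The plan is to recast the hypothesis in terms of the underlying division algebra, use this to exhibit a $\ka$-point on $S/G$, and then conclude by the minimal model program.

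First I would reinterpret the hypothesis. Let $A$ be the central division algebra of degree $3$ over $\ka$ with $S\cong\mathrm{SB}(A)$, so that $\Aut(S)=A^\times/\ka^\times$, and let $h\in A^\times$ lift a generator of $G$; then $h^3=b\in\ka^\times$ and $b$ is not a cube, since otherwise $\bar h$ would be trivial. As in the proof of Lemma~\ref{action}, the three $G$-fixed points on $\overline{S}$ are the eigenlines of $h$; since $\ka[h]\cong\ka(\sqrt[3]{b})$ is a maximal subfield of $A$ it splits $A$, and these eigenlines become rational precisely over $\ka(\sqrt[3]{b},\omega)$, where $\omega$ is a primitive cube root of unity. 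Hence the assumption that they lie in a cyclic cubic extension forces $\omega\in\ka$ and $K=\ka(\sqrt[3]{b})$. Consequently $A$ is a symbol algebra $(b,c)_3=\ka\langle u,w\rangle$ with $u^3=b$, $w^3=c$, $wu=\omega uw$; identifying $h$ with $u$ gives $G=\langle\bar u\rangle$. Non-triviality of $A$ forces $c$ to be a non-cube, so $\bar w$ also has order $3$, and $H:=\langle\bar u,\bar w\rangle\cong\mumu_3\times\mumu_3$ is a subgroup of $\Aut(S)$ containing $G$.

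The key step, which I expect to be the main obstacle, is to produce a smooth $\ka$-point on $S/G$. Over $\kka$ the group $H$ acts on $\overline{S}\cong\Pro^2_{\kka}$ as the image of a Heisenberg group, hence without fixed points; in particular the fixed locus $\mathrm{Fix}(\bar w)$ consists of three geometric points, disjoint from $\mathrm{Fix}(G)$, and since $\bar u$ commutes with $\bar w$ it permutes $\mathrm{Fix}(\bar w)$ without fixed points, so $\mathrm{Fix}(\bar w)$ is a single $G$-orbit. Its image in $S/G$ is therefore one point $q$, which is smooth because the singular locus of $S/G$ is the image of $\mathrm{Fix}(G)$; moreover $q$ is $\Gal(\kka/\ka)$-invariant, since $\bar w\in\Aut(S)(\ka)$ makes $\mathrm{Fix}(\bar w)$ a Galois-stable set, and therefore $q\in(S/G)(\ka)$. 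One can also see this explicitly: in coordinates in which $S\otimes K\cong\Pro^2_K$ with $\bar u=\operatorname{diag}(1,\omega,\omega^2)$, the quotient $S/G$ is the cubic surface $\{a_1a_2a_3=a_0^3\}\subset\Pro^3$ with the Galois action on the coordinates $a_i$ twisted by powers of $c$, for which $q=(c^2:c:1:c)$ is visibly a $\ka$-point.

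Finally I would deduce $\ka$-rationality from the existence of the smooth $\ka$-point $q$. The surface $S/G$ is geometrically rational, and over $\kka$ it is the toric cubic surface with three $A_2$ points; its minimal resolution $Y\to S/G$ is a $\ka$-form of the toric weak del Pezzo surface of degree $3$ with a $3A_2$ configuration, and $Y$ carries a $\ka$-point lying over $q$. Running the $\Gal$-equivariant minimal model program on $Y$, the three $(-1)$-curves lying over the lines of the cubic form a single Galois orbit of disjoint curves, so their contraction is defined over $\ka$ and produces a del Pezzo surface $Z$ of degree $6$ with a $\ka$-point; and a del Pezzo surface of degree $6$ with a $\ka$-point is $\ka$-rational, so $S/G$ is $\ka$-rational. (If $Z$ happens not to be $\ka$-minimal, one contracts a further Galois orbit of three disjoint $(-1)$-curves on $\overline{Z}$ to reach a Severi--Brauer surface which, carrying a $\ka$-point, equals $\Pro^2_{\ka}$, with the same conclusion.) The only points needing care are the bookkeeping of the equivariant program for this particular toric surface and the cited rationality of a degree-$6$ del Pezzo surface with a rational point, both of which are routine.
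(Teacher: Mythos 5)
Your proposal is correct in substance, but it takes a genuinely different route from the paper. The paper never leaves geometry: it blows up the degree-$3$ closed point $\{p_1,p_2,p_3\}$ to get a del Pezzo surface of degree $6$, then blows up a Galois-orbit of three $G$-fixed intersection points of the six $(-1)$-curves to obtain a (weak) cubic surface $X$ with three pointwise $G$-fixed lines; since $G$ then acts on $\Pro^3_{\ka}$ as $(x:y:z:t)\mapsto(x:y:z:\omega t)$, the quotient map is a linear projection and $X/G\cong\Pro^2_{\ka}$ on the nose, with no need for a rational point argument, a second automorphism, or the MMP. You instead work on the algebra side: you show the cyclic-cubic hypothesis forces $\omega\in\ka$, hence $A$ is a symbol algebra and $G$ sits inside $\mumu_3\times\mumu_3=\langle\bar u,\bar w\rangle\subset\Aut(S)$; the fixed locus of $\bar w$ is a Galois-stable free $G$-orbit, whose image is a smooth $\ka$-point of $S/G$; then you resolve the toric cubic with three $A_2$-points, contract the Galois-stable triple of disjoint $(-1)$-curves to a del Pezzo surface of degree $6$ with a $\ka$-point, and invoke its $\ka$-rationality (the same \cite[Chapter 4]{Isk96}-type input the paper uses in Lemma~\ref{dP6rat}). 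What each approach buys: the paper's argument is self-contained and explicit, producing $\Pro^2_{\ka}$ directly; yours makes transparent the arithmetic meaning of the dichotomy in Lemma~\ref{action} (cyclic case $\Leftrightarrow$ $\omega\in\ka$ $\Leftrightarrow$ symbol algebra, with the extra $\mumu_3$ coming from the second symbol generator) and localizes all the work in producing one smooth rational point, at the cost of importing standard facts ($\Aut(S)\cong A^\times/\ka^\times$, the cyclic presentation of degree-$3$ algebras, rationality of a degree-$6$ del Pezzo surface with a point). Two steps you should tighten when writing this up: (i) the assertion that the fixed points ``become rational precisely over $\ka(\sqrt[3]{b},\omega)$'' should be replaced by the cleaner argument that the residue field of the fixed closed point is a cubic subfield of $\ka(\sqrt[3]{b},\omega)$, and when $\omega\notin\ka$ all such subfields are non-Galois, so the cyclic hypothesis forces $\omega\in\ka$; (ii) the explicit point $(c^2:c:1:c)$ depends on normalizations of the descent cocycle, so keep the coordinate-free argument (image of the Galois-stable free orbit $\mathrm{Fix}(\bar w)$) as the actual proof and the coordinates only as a check.
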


\begin{proof}
Let $p_1$, $p_2$ and $p_3$ be the isolated fixed geometric points of $G$ on $S$. One can \mbox{$G$-equivariantly} blow up this triple of points and get a del Pezzo surface $\widetilde{S}$ of degree~$6$. Note that each of six $(-1)$-curves is $G$-invariant, therefore the six points of~intersection of~these six curves are $G$-fixed. One can easily check that on the tangent space of $\overline{\widetilde{S}}$ at~each of these points the group $G$ acts as $\langle \operatorname{diag}\left(\omega; \omega\right) \rangle$, where $\omega$ is a third root of unity.

The group $\Gal\left(K / \ka \right) \cong \mumu_3$ does not preserve any $G$-fixed point on $\widetilde{S}$. Therefore one can blow up any triple of $G$-fixed points defined over $\ka$, and get a surface $\widetilde{X}$. One has $K_{\widetilde{X}}^2 = 3$, the exceptional divisor of the blowup $\widetilde{X} \rightarrow \widetilde{S}$ is a triple of pointwisely $G$-fixed $(-1)$-curves $E_1$, $E_2$ and $E_3$, and the proper transforms of the six $(-1)$-curves on $\widetilde{S}$ are $(-2)$-curves on~$\widetilde{X}$. One can check that $\widetilde{X}$ is a weak del Pezzo surface, and the anticanonical map
$$
\varphi_{|-K_{\widetilde{X}}|}: \widetilde{X} \rightarrow X
$$
\noindent blows down the six $(-2)$-curves into three $A_2$-singularities. Moreover, $X$ is a singular cubic surface, and the images of $E_1$, $E_2$ and $E_3$ are lines passing through pairs of singular points. 

The linear systems $|-K_{\widetilde{S}}|$, $|-K_{\widetilde{X}}|$ and $|-K_X|$ are $G$-invariant, since the morphisms $\widetilde{S} \rightarrow S$, $\widetilde{X} \rightarrow \widetilde{S}$ and $\widetilde{X} \rightarrow X$ are $G$-equivariant. Therefore the action of $G$ on $X$ induces an~action of $G$ on $\Pro^3_{\ka}$. Moreover, $G$ pointwisely fixes a plane in $\Pro^3_{\ka}$ since it fixes three lines meeting each other. Therefore one can choose coordinates $(x : y : z : t)$ on $\Pro^3_{\ka}$ such that the~action of $G$ is given by $(x:y:z:t) \mapsto (x: y : z : \omega t)$. Then the quotient map $X \rightarrow X / G$ is just a restriction of the projection $\Pro^3_{\ka} \dashrightarrow \Pro^2_{\ka}$ given by
$$
(x:y:z:t) \mapsto (x:y:z)
$$
\noindent (cf. the proof of Proposition \ref{quot2}). Therefore $X / G \cong \Pro^2_{\ka}$, and $S / G$ is $\ka$-rational, since $S / G$ and $X / G$ are birationally equivalent.

\end{proof}

For the remaining case $\Gal\left(K / \ka \right) \cong \mathfrak{S}_3$ we need the following lemma.

\begin{lemma}
\label{dP6rat}
Let $S'$ be a del Pezzo surface of degree $6$ over arbitrary field $\ka$ of characteristic~$0$. If $S'$ contains a point of degree $2$ and a point degree $3$, then $S'$ is $\ka$-rational.
\end{lemma}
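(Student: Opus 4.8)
The plan is to deduce the $\ka$-rationality of $S'$ from the existence of a $\ka$-rational point on it. This deduction is classical: a del Pezzo surface of degree $6$ over $\ka$ with a $\ka$-point is $\ka$-rational (for instance this is part of M.~Blunk's classification of such surfaces; alternatively one blows up the point to reach a del Pezzo surface of degree $5$, which always has a $\ka$-point and is $\ka$-rational). So it is enough to produce a $\ka$-point on $S'$. The hypothesis enters through the observation that $S'$ carries a $0$-cycle of degree $1$, namely $P_3 - P_2$, where $P_2$ and $P_3$ are the given points of degrees $2$ and $3$; indeed $3 - 2 = 1$.

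To pass from a $0$-cycle of degree $1$ to a $\ka$-point I would invoke the structure theory of del Pezzo surfaces of degree $6$. Over $\kka$ the surface $\overline{S}'$ is toric, its six $(-1)$-curves form a hexagon, and the group of symmetries of this hexagon is $\mathfrak{S}_3 \times \mumu_2$: the factor $\mumu_2$ interchanges the two triples of pairwise disjoint $(-1)$-curves, while the factor $\mathfrak{S}_3$ permutes the three conic bundle structures on $\overline{S}'$. Hence there is a quadratic étale $\ka$-algebra $K$ over which each of the two triples is Galois-invariant, so that contracting one of them produces a Severi--Brauer surface $Y$ over $K$; and a cubic étale $\ka$-algebra $L$ over which one of the three conic bundle structures descends to a conic bundle $S'_L \to C$ over a conic $C/L$. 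The key external input (Blunk's classification) is that $S'$ has a $\ka$-point if and only if $Y$ is split, i.e. $Y \cong \Pro^2_K$, and $C$ is split, i.e. $C \cong \Pro^1_L$.

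Granting this, one finishes with a short manipulation of $0$-cycles, using that base change to $K$ or $L$ and pushforward along any morphism of complete varieties both preserve the degree of a $0$-cycle. Restricting $P_3 - P_2$ to $S'_K$ and pushing it forward along the contraction $S'_K \to Y$ gives a $0$-cycle of degree $1$ on $Y$; but a non-trivial Severi--Brauer surface has every closed point of degree divisible by $3$, hence carries no $0$-cycle of degree $1$, so $Y$ is split. Likewise, restricting $P_3 - P_2$ to $S'_L$ and pushing it forward along $S'_L \to C$ gives a $0$-cycle of degree $1$ on $C$, which must be supported on a closed point of odd degree over $L$; by Springer's theorem the conic $C$ then has an $L$-point and is split. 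Thus $Y$ and $C$ are both split, $S'$ has a $\ka$-point, and $S'$ is $\ka$-rational.

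The main obstacle is the second step: setting up precisely the correspondence between a degree $6$ del Pezzo surface and the pair $(Y/K,\ C/L)$, and citing the exact form of the rational-point criterion; once this is in hand the cycle-theoretic argument is routine. One can also try to avoid the classification by contracting a Galois-stable triple of disjoint $(-1)$-curves directly over $\ka$: if $\Gal(\kka/\ka)$ does not interchange the two triples one gets Severi--Brauer surfaces over $\ka$ and the degree $2$ point forces one of them to be trivial, while in the interchanging case one must first pass to the quadratic algebra $K$ and then descend, which is the delicate point.
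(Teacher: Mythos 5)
Your proposal is correct, but it follows a genuinely different route from the paper. The paper works geometrically and stays self-contained: it uses the degree~$2$ point to produce a conjugate pair of points on $S'$ defined over the quadratic extension, blows them up to get a (possibly weak) del Pezzo surface of degree~$4$, passes to its anticanonical model $X' \subset \Pro^4_{\ka}$, and finds a smooth $\ka$-point on $X'$ by intersecting with the plane through the three conjugate points coming from the degree~$3$ point (an argument in the spirit of \cite[Lemma~2.4]{Sh20c}); the resulting $\ka$-point on $S'$ gives $\ka$-rationality by \cite[Chapter 4]{Isk96}. You instead reduce the lemma to the statement that a degree~$6$ del Pezzo surface of index~$1$ has a $\ka$-point, which you extract from Blunk's classification: the quadratic \'etale algebra $K$ controlling the two triples of disjoint $(-1)$-curves and the Severi--Brauer surface $Y/K$, the cubic \'etale algebra $L$ controlling the conic bundle classes and the conic $C/L$, the criterion that a $\ka$-point exists if and only if $Y$ and $C$ are split, and then a pushforward of the degree~$1$ zero-cycle $P_3 - P_2$ to kill both obstructions (divisibility by $3$ of degrees of points on a non-trivial Severi--Brauer surface, and Springer's theorem for the conic). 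Your argument is shorter modulo the external input and in fact proves slightly more (any zero-cycle of degree~$1$ suffices, not specifically points of degrees $2$ and $3$), whereas the paper's argument avoids invoking Blunk's structure theory altogether and uses only classical projective geometry of del Pezzo surfaces. Two small points to watch in your write-up: handle the case where $K$ or $L$ is a split (non-field) \'etale algebra, where $Y$ or $C$ decomposes into components over $\ka$ and the cycle argument should be run componentwise; and in the final step, the ``blow up a $\ka$-point to reach a quintic del Pezzo'' shortcut needs the point to lie off the $(-1)$-curves, so it is safer to quote the standard result that a del Pezzo surface of degree~$\geqslant 5$ with a $\ka$-point is $\ka$-rational, exactly as the paper does via \cite[Chapter 4]{Isk96}.
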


\begin{proof}
Let $L$ be an extension of degree $2$ of $\ka$, such that there is a $L$-point on $S' \otimes L$. Then~$S' \otimes L$ is $L$-rational, therefore we can find a pair of geometric points $p_1$ and $p_2$ on $S'$ defined over $L$ and not lying on $(-1)$-curves.

Let us consider the blowup $X \rightarrow S'$ at $p_1$ and $p_2$. One can check that the surface $X$ is~a~(possibly, weak) del Pezzo surface of degree $4$, and there is a point of degree $3$ on $X$. The anticanonical map given by $|-K_X|$ maps $X$ to $X' \subset \Pro^4_\ka$, where $X'$ is a (possibly, singular) del Pezzo surface of degree $4$. The arguments of the proof of \cite[Lemma~2.4]{Sh20c} show that in this case $X'$ has a smooth $\ka$-point. For convenience of the reader, we give the other proof of this result.

Indeed, let $q_1$, $q_2$ and $q_3$ be three geometric points on $X'$, such that this triple is~defined over $\ka$. Such triple exists since $S'$ contains a point of degree $3$. Consider a plane $\Pi$ defined over $\ka$ and passing through $q_1$, $q_2$ and $q_3$. The intersection number $\Pi \cdot X' = 4$, and the multiplicity of the intersection is the same at~the~points $q_1$, $q_2$ and $q_3$. Therefore, if $\Pi \cap X'$ is zero-dimensional, then there is an other smooth $\ka$-point $q$ on $\Pi \cap X'$.

If $\Pi \cap X'$ is one-dimensional, then the intersection $C = X' \cap \Pi$ is a conic, since $X'$ is an intersection of two quadrics in $\Pro^4_{\ka}$. If $C$ is smooth, then there are infinitely many $\ka$-points on $C$, since there is a point of degree $3$ on $C$. If $C$ is a union of two lines then the points $q_1$, $q_2$ and $q_3$ have to lie on one of these lines. Therefore this line is defined over~$\ka$. If~$C$ is a double line, then this line is defined over $\ka$. In each case we find infinitely many $\ka$-points on $C$, thus there is a smooth $\ka$-point on $X'$.

The image of any $\ka$-point on $X'$ under the map $X' \rightarrow S'$ is a $\ka$-point on $S'$. Therefore~$S'$ is $\ka$-rational by \cite[Chapter 4]{Isk96}.

%Over $\kka$ we can decompose this blowup into the sequence of two blowups $\pi_2: \overline{X} \rightarrow \overline{X}'$ and $\pi_1: \overline{X}' \rightarrow \overline{S}'$, where $\pi_1$ is the blowup of $\overline{S}'$ at $p_1$, and $\pi_2$ is the blowup of $\overline{X}'$ at $\pi_1^{-1} p_2$. Let us show that either there is a $\ka$-point on $X$, or $\overline{X}$ and $X$ are del Pezzo surfaces of degree $4$. The surface $\overline{X}'$ is a del Pezzo surface of degree $5$, since $p_1$ does not lie on $(-1)$-curves on $\overline{S}'$. Let $E$ be the preimage of $p_1$ on $\overline{X}'$, and $E_1$, $E_2$ and $E_3$ are $(-1)$-curves on $\overline{X}'$ that are not proper transforms of $(-1)$-curves on $\overline{S}'$.

%One can check that the surface $X$ is a del Pezzo surface of degree $4$, and there is a point of degree $3$ on $X$.
%In the proof of \cite[Lemma 2.4]{Sh20c} it is showed that in this case $X$ has a $\ka$-point. Therefore there is a $\ka$-point in $S'$.

\end{proof}

\begin{lemma}
\label{rat6}
Let $S$ be a non-trivial Severi--Brauer surface and $G$ be a finite subgroup of~$\Aut(S)$ isomorphic to $\mumu_3$. Assume that the isolated fixed points of $G$ are defined over extension $K / \ka$ of degree $6$ with $\Gal\left(K / \ka \right) \cong \mathfrak{S}_3$. Then the quotient $S / G$ is $\ka$-rational.

\end{lemma}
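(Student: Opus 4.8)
The plan is to exhibit $S/G$ as birationally equivalent to a del Pezzo surface of degree $6$ satisfying the hypotheses of Lemma~\ref{dP6rat}. Let $\ka'$ be the unique quadratic subextension of $K/\ka$, that is, the fixed field of the normal subgroup $\mumu_3$ of $\mathfrak{S}_3 \cong \Gal(K/\ka)$; then $K/\ka'$ is cyclic of degree $3$. Since $2$ is coprime to $3$, the surface $S \otimes \ka'$ is still a non-trivial Severi--Brauer surface, $G$ acts on it, and its $G$-fixed points are defined over the cyclic degree-$3$ extension $K/\ka'$. Hence, applying Lemma~\ref{rat3} over $\ka'$, the surface $(S/G)\otimes\ka' = (S\otimes\ka')/G$ is $\ka'$-rational.

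Next I would describe $S/G$ over $\ka$ itself. As in the proof of Lemma~\ref{nonrat}, $G$ is contained in a maximal torus of $\Aut(\overline S) = \mathrm{PGL}_3(\kka)$, so $S/G$ is a $\ka$-form of the toric surface $\Pro^2_\kka/\mumu_3$; here $\mumu_3$ acts with three isolated fixed points (Lemma~\ref{action}), i.e.\ diagonally with weights $(1,\omega,\omega^2)$, and the quotient is the cubic surface $\{xyz = w^3\} \subset \Pro^3_\kka$ with three $A_2$ singularities. Thus $S/G$ is a (singular) del Pezzo surface of degree $3$ over $\ka$, its three singular points being the images of $p_1,p_2,p_3$, which form a single closed point of degree $3$ because $\Gal(K/\ka)$ permutes $p_1,p_2,p_3$ transitively. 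Let $Z \to S/G$ be the minimal resolution; it is a weak del Pezzo surface of degree $3$, and on $Z\otimes\kka$ the only $(-1)$-curves are the strict transforms of the three lines of the cubic, which turn out to be pairwise disjoint. As they are permuted by $\Gal(\kka/\ka)$, one can contract them over $\ka$ and obtain a morphism $Z \to Z'$ to a smooth del Pezzo surface $Z'$ of degree $6$ birationally equivalent to $S/G$.

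It remains to find on $Z'$ a point of degree $2$ and a point of degree $3$. For the first: $Z'\otimes\ka'$ is birationally equivalent to $(S/G)\otimes\ka'$, hence $\ka'$-rational, so $Z'$ has a $\ka'$-point whose image in $Z'$ is a closed point of degree $1$ or $2$; in either case $Z'$ has a point of degree $2$ (if the image is a $\ka$-point, take one lying on an anticanonical curve through it). For the second I would determine the Galois action on the hexagon of $(-1)$-curves of $Z'\otimes\kka$, which factors through $\Gal(K/\ka)\cong\mathfrak{S}_3$. The crucial point is that a Galois element acting as a transposition on $\{p_1,p_2,p_3\}$ must send the eigenvalue $\omega$ of a generator of $G$ to $\omega^{-1}$, hence conjugates this generator to its inverse; consequently it interchanges the two non-trivial eigendirections at the fixed point it preserves, and therefore acts on the corresponding $A_2$-chain of $(-2)$-curves of $Z$ by the non-trivial Dynkin-diagram symmetry. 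Carrying this information through the contraction $Z \to Z'$, the Galois group acts on the hexagon of $Z'$ by the subgroup of its symmetry group generated by the rotation of order $3$ and those reflections that interchange the two triangles of pairwise disjoint lines; under such an action each of the two alternating triples of vertices of the hexagon is stable, hence is a closed point of $Z'$ of degree $3$. With these two points in hand, $Z'$ is $\ka$-rational by Lemma~\ref{dP6rat}, and therefore so is $S/G$.

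The main obstacle is precisely this determination of the Galois action on $Z'\otimes\kka$: one must show that the component of the action in the involution interchanging the two triangles of lines is the sign character of $\mathfrak{S}_3 \cong \Gal(K/\ka)$, and not trivial --- in the latter case $Z'$, and hence $S/G$, would instead be birationally equivalent to a Severi--Brauer surface, as in Lemma~\ref{nonrat}. This forces one to keep careful track of how the Galois action on $\overline S$ interacts with the torus containing $G$. By comparison, identifying $S/G$ and its minimal resolution $Z$ with the surfaces above, and listing their $(-1)$- and $(-2)$-curves, is routine.
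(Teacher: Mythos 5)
Your proposal is correct in substance and follows the paper's strategy at its two load-bearing points: the degree-$2$ point comes from applying Lemma~\ref{rat3} over the quadratic subextension $K^{\mumu_3}/\ka$ (exactly as in the paper), and the conclusion comes from Lemma~\ref{dP6rat}. Where you genuinely differ is in how the degree-$6$ del Pezzo model and the degree-$3$ point are produced. The paper resolves $S/G$ and runs a $\Gal\left(K/\ka\right)$-equivariant MMP on the toric form, reducing to an abstract minimal model of degree $9$ or $6$, and obtains the degree-$3$ point by pushing forward a point of degree $3$ from $S$. You instead build the degree-$6$ model explicitly, resolving the three $A_2$-singularities of the cubic model $\{xyz=w^3\}$ and contracting the three lines, and you read off the degree-$3$ point from the Galois action on the hexagon. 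Your route avoids both the MMP and the degree-$9$ case, which is a mild simplification; the paper's route avoids any explicit analysis of the Galois action on the $(-1)$-curve configuration.

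Three remarks on the details. First, your key sentence is consistent only if ``vertices of the hexagon'' means the six intersection points of consecutive $(-1)$-curves: the two triangles of \emph{lines} are swapped by transpositions (as you correctly state), so the alternating triples of lines are not stable, while the alternating triples of intersection points are, and each is a single orbit under the $3$-cycles, hence a closed point of degree $3$. Even more directly, the three midpoints of the $A_2$-chains (the points of $Z$ lying over the three singular points of $S/G$, which form one Galois orbit) are untouched by the contraction and already give the degree-$3$ point; alternatively one can push forward a degree-$3$ point of $S$, as the paper does. Second, the ``main obstacle'' you flag is in fact automatic: a transposition fixes one singular point and swaps the two lines of the cubic passing through it, hence swaps the two exceptional curves of the chain over that point (each of them meets exactly one of the two lines), so the triangle-swapping behaviour is forced by incidence alone; your eigenvalue argument reaches the same conclusion (and mirrors the computation in Lemma~\ref{action}), but no delicate bookkeeping of the torus is required. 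Third, your fallback for producing a degree-$2$ point when $Z'$ has a $\ka$-point (via an anticanonical curve) is shaky as stated, but unnecessary: a del Pezzo surface of degree $6$ with a $\ka$-point is already $\ka$-rational by \cite[Chapter 4]{Isk96}, the fact used at the end of the proof of Lemma~\ref{dP6rat}, so that branch terminates immediately.
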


\begin{proof}

The group $G$ has three isolated fixed points on $S$, that are defined over $K$. Therefore the quotient $S / G$ is a $\ka$-form of a singular toric surface, such that the three singular points are not defined over $\ka$, and defined over $K$. The Galois group $\mathfrak{S}_3 \cong \Gal\left(K/\ka\right)$ acts on the fan of the corresponding toric surface. Let us resolve the singularities of $S / G$ and run $\mathfrak{S}_3$-equivariant minimal model programm. As a result we should obtain $\mathfrak{S}_3$-minimal del Pezzo surface or conic bundle $S'$, such that $\overline{S}'$ is a toric surface. Any toric del Pezzo surface or conic bundle $X$ has $K_X^2 \geqslant 6$, and one can easily check that for such surface a~group $\mathfrak{S}_3$ can faithfully act only on the fan of a del Pezzo surface of degree $9$ or $6$.

Let us show that $S'$ is $\ka$-rational. Note that there is a normal subgroup $\mumu_3$ in~the~group $\mathfrak{S}_3 \cong \Gal\left(K/\ka\right)$, therefore there exist an extension $L = K^{\mumu_3}$ of $\ka$ of degree~$2$, such that the isolated fixed points of $G$ are not defined over $L$, and defined over the extension $K / L$ of degree $3$. Thus $S' \otimes L \cong \left(S \otimes L\right) / G$ is $L$-rational by Lemma \ref{rat3}. So there is a point of~degree $2$ on $S'$.  If $S'$ is a del Pezzo surface of degree $9$ then $S'$ is $\ka$-rational, since the~line passing through a point of~degree $2$ is defined over $\ka$.

If $S'$ is a del Pezzo surface of degree $6$ then there is a point of degree $3$ on $S'$, since the image of any point of degree $3$ on $S$ under the quotient map $S / G$ is either a $\ka$-point, or~a~point of degree $3$. Therefore $S'$ is $\ka$-rational by Lemma \ref{dP6rat}.

\end{proof}

Now we can prove Theorem \ref{quot3}.

\begin{proof}[Proof of Theorem \ref{quot3}]
A finite subgroup $G$ of $\Aut(S)$ is isomorphic to $\mumu_n$, $\mumu_{3n}$, $\mumu_n \rtimes \mumu_3$ \mbox{or~$\mumu_3 \times \left( \mumu_n \rtimes \mumu_3 \right)$} by Theorem \ref{class2}.

If there is a subgroup $N \cong \mumu_n$ in $G$, where $n$ is a positive integer divisible only by~primes congruent to $1$ modulo $3$, then $N$ is normal in $G$, and the quotient $S / N$ \mbox{is $G / N$-birationally} equivalent to a non-trivial Severi--Brauer surface $S'$ with the action of $G / N$ by Lemma~\ref{nonrat} and Remark~\ref{eqnonrat}. In particular, if $G \cong \mumu_n$, then $S / G$ is birationally equivalent to $S$ by~Lemma~\ref{nonrat}.

The other three cases of $G$ are reduced to the case, where a group $\mumu_3$ or $\mumu_3^2$ acts on~a~non-trivial Severi--Brauer surface, since $S / G$ and $S' / (G / N)$ are birationally equivalent. 

The quotient of Severi--Brauer surface $S$ by $\mumu_3$ is $\ka$-rational by Lemmas \ref{rat3} and \ref{rat6}, and~the~quotient $S / \mumu_3^2$ is birationally equivalent to a quotient of a toric surface $S / \mumu_3$ by~a~cyclic group $\mumu_3$. This quotient is $\ka$-rational by \cite[Proposition 4.5]{Tr14}.

\end{proof}

\smallskip
\textbf{Acknowledgements.} The author is grateful to Costya Shramov and Sergey Gorchinskiy for many useful discussions and comments. % Also the author would like to thank the reviewers of this paper for many useful comments.

%\section{Preliminaries}

\bibliographystyle{alpha}

\end{document}